\newtheorem{df}{Definition}
\newtheorem{lem}{Lemma}
\newtheorem{teo}{Theorem}
\newtheorem{cor}{Corollary}
\newcommand{\N}{\mathbb{N}}
\newcommand{\Z}{\mathbb{Z}}
\newcommand{\C}{\mathbb{C}}
\newcommand{\R}{\mathbb{R}}
\newcommand{\p}{\mathbb{P}}
\newcommand{\nn}{\newline\noindent}
\newcommand{\z}{\xi}
\newcommand{\e}{\varepsilon}
\begin{document}
\title{{Convexity properties of coverings of 1-convex surfaces}
\thanks{\noindent
Mathematics Subject Classification (2000): 32E05, 32F10.
\newline\indent
Key words: holomorphically convex manifolds, 1-convex manifolds, universal covering. }
\author{M. Col\c toiu and C. Joi\c ta }}

\date{}
\maketitle
\begin{abstract}
We prove that there exists a 1-convex surface whose universal covering does not
satisfy the discrete disk property.
\end{abstract}

\section{Introduction}

The well-known Shafarevich Conjecture asserts that the universal covering space of a projective algebraic 
manifold is holomorphically convex. Although there are partial results, a complete 
answer to this problem is not known even for surfaces. (We remark that if instead of the universal 
covering one considers an arbitrary non-compact one, there are counterexamples, see \cite{Nap}). 

In this paper we are interested in studying convexity properties of the universal covering of 
1-convex surfaces. We recall that projective algebraic manifolds are a particular case of Moishezon manifolds, that
the exceptional set of a 1-convex manifold is a Moishezon space and that every Moishezon space is 
the exceptional set of a 1-convex manifold.

Suppose that $X$ is a 1-convex surface and $p:\tilde X\to X$ is a covering map. It is known (see \cite{Co}) 
that in general $\tilde X$ is not holomorphically convex. In fact $\tilde X$ might not be even
weakly 1-complete (that is, $\tilde X$ might not carry a continuous plurisubharmonic exhaustion function).
However $\tilde X$ can be exhausted by a sequence of strongly pseudoconvex domains and therefore $\tilde X$
satisfies the continuous disk property (see the next section for a precise definition). 
We investigate the \textit{discrete} disk property for $\tilde X$ which definitely is a stronger property.

Our main goal is to give an example of a 1-convex surface whose universal covering does not
satisfy the discrete disk property. In particular it will not be $p_5$-convex in the sense of \cite{DG}.
We proved in \cite{CJ} that if $\tilde X$ does not contain
an infinite Nori string of rational curves then actually $\tilde X$ does satisfy the discrete disk property.
Therefore our example must contain such a Nori string.

We remark that important convexity properties of coverings of 1-convex manifolds with respect to meromorphic
functions have been established in \cite{Nap}.

\section{Preliminaries}

We denote by $\Delta$ the unit disk in $\C$,  $\Delta=\{z\in\C: |z|< 1\}$ and for $c>0$
by $\Delta_{1+c}$ the disk $\Delta_{1+c}:=\{z\in\C:|z|<1+c\}$.
\nn
For $\epsilon>0$ we define $H_\epsilon\subset \C\times\R$ as $$H_\epsilon=
\Delta_{1+\epsilon}\times[0,1)\bigcup \{z\in\C:1-\epsilon<|z|<1+\epsilon\}\times\{1\}.$$

The following is just an intrinsic version of the classical Continuity Principle (see, for example
\cite{GF} page 47).

\begin{df}
A complex space $X$ is said to satisfy the continuous disk property if whenever $\epsilon$ is a positive number
and $F:H_{\epsilon}\to X$ is a continuous function such that, for every $t\in[0,1)$, 
$F_t:\Delta_{1+\epsilon}\to X$, $F_t(z)=F(z,t)$, is holomorphic we have that $F(H_{\epsilon_1})$
is relatively compact in $X$ for any $0<\epsilon_1<\epsilon$.
\end{df}

Motivated by the above definition we introduced in \cite{CJ}: 

\begin{df}
Suppose that $X$ is a complex space. We say that $X$ satisfies the discrete disk property if
whenever  $g_n:U\to X$ is a sequence of holomorphic functions defined on an open neighborhood 
$U$ of $\overline \Delta$ for which there exists an $\epsilon>0$ and a continuous function 
$\gamma:S^1=\{z\in\C: |z|=1\}\to X$ such that $\Delta_{1+c}\subset U$, 
$\bigcup_{n\geq 1} g_n(\Delta_{1+\epsilon}\setminus \Delta)$ is 
relatively compact in $X$ and ${g_n}_{|S^1}$ converges uniformly to $\gamma$ we have that 
$\bigcup_{n\geq 1}g_n(\overline \Delta)$ is relatively compact in $X$.
\end{df}

Note that if a complex space is $p_5$-convex in the sense of Docquier and Grauert \cite{DG} then it satisfies
the discrete disk property. Therefore our example will not be $p_5$-convex either. $X$ is called
$p_5$-convex if whenever $\{\Delta_\nu\}_{\nu\geq 0}$ is a sequence of holomorphic disks such that
$\bigcup_{\nu\geq 0}\partial\Delta_\nu\Subset X$ we have that 
$\bigcup_{\nu\geq 0}\overline\Delta_\nu\Subset X$ as well.

In \cite{Fo} it is constructed a complex manifold which is an increasing
union of Stein open subsets, and therefore it satisfies the continous disk property, but
it does not satisfy the discrete disk property. In particular this shows that the discrete disk property 
is stronger that the continuous one.

We recall that a compact complex curve is called rational if its normalization is ${\mathbb P}^1$.

A complex manifold is called 1-convex if it is the modification of a Stein space at a finite set of points.

\begin{df}
Let $L$ be a connected 1-dimensional complex space and $\cup L_i$ be its decomposition into 
irreducible components. $L$ is called an infinite Nori string if all $L_i$ are compact and $L$ is 
not compact
\end{df}

The following theorem was proved in \cite{CJ}.

\begin{teo}
Let $X$ be a 1-convex surface and $p:\tilde X\to X$ be a covering map. If $\tilde X$ does not contain
an infinite Nori string of rational curves then $\tilde X$ satisfies the discrete disk property.
\end{teo}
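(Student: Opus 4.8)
The plan is to argue by contraposition: assuming a sequence $g_n:U\to\tilde X$ satisfies the hypotheses of the discrete disk property but $\bigcup_{n}g_n(\overline\Delta)$ fails to be relatively compact, I would produce an infinite Nori string of rational curves inside $\tilde X$. The first step is to confine all the disks to a fiber over a fixed compact set. Since $X$ is $1$-convex it carries an exhaustion function $\phi:X\to\R$ which is plurisubharmonic and strictly plurisubharmonic outside the compact exceptional set; hence $\tilde\phi=\phi\circ p$ is plurisubharmonic on $\tilde X$. For each $n$ the composition $\tilde\phi\circ g_n$ is subharmonic on a neighbourhood of $\overline\Delta$, so by the maximum principle its values on $\overline\Delta$ are controlled by those on $S^1$. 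Because $g_n|_{S^1}$ converges uniformly to $\gamma$, the circles $g_n(S^1)$ together with $\gamma(S^1)$ lie in a single compact set, on which $\tilde\phi$ is bounded, say by $M$. Therefore $g_n(\overline\Delta)\subset p^{-1}(K_M)$ for every $n$, where $K_M=\{\phi\le M\}$ is compact in $X$.

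Next I would localise to one connected component. Since $p$ restricts to a covering map over $K_M$, the connected components of $p^{-1}(K_M)$ are open, and $\gamma(S^1)$, being connected and contained in the closed set $p^{-1}(K_M)$, lies in a single component $C$. The uniform convergence $g_n|_{S^1}\to\gamma$ then forces $g_n(S^1)\subset C$ for all large $n$, and since each image $g_n(\overline\Delta)$ is connected and contained in $p^{-1}(K_M)$, the whole disk lies in $C$. Discarding the finitely many remaining disks, whose images are individually compact, I may assume $g_n(\overline\Delta)\subset C$ for every $n$. If $C$ were compact the proof would be finished, so the failure of relative compactness means the disks leave every compact subset of $C$: there are interior points $z_n\in\Delta$ with $w_n=g_n(z_n)$ escaping to infinity in $\tilde X$, while the projections $p(w_n)$ stay in the compact set $K_M$.

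Finally I would pass to the limit downstairs and study the loss of compactness. The projected disks $f_n=p\circ g_n$ all take values in the compact set $K_M\subset X$, so by a normal families argument a subsequence converges, uniformly on compact subsets of $\Delta$ and with boundary values tending to $p\circ\gamma$, to a holomorphic disk $f:\overline\Delta\to X$. If this convergence were uniform up to the boundary, the lifts $g_n$ would converge as well, continuous dependence of lifts along the simply connected disk with a fixed base point giving a single limit $\tilde f$; then $\bigcup_n g_n(\overline\Delta)$ would lie in a compact neighbourhood of $\tilde f(\overline\Delta)$, contradicting the escape of the $w_n$. Hence compactness is lost in the interior, and rescaling at the concentration points produces non-constant holomorphic maps from $\mathbb{P}^1$ into $\tilde X$, that is, rational curves. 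Tracking these bubbles along the successive scales at which the disks escape yields a connected union of such curves that is not contained in any compact subset of $\tilde X$; its irreducible components are compact rational curves while the union itself is non-compact, so it is an infinite Nori string of rational curves, contradicting the hypothesis.

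I expect the decisive difficulty to be precisely this last step. The trapping argument and the localisation to a single component are routine, but making the bubbling analysis rigorous in the possibly singular complex-analytic category, and in particular verifying that the escaping bubbles organise into one \emph{connected}, non-compact string rather than into a compact configuration, is where the real work lies; this is also the point at which the rationality of the components, and hence the appearance of a Nori string \emph{of rational curves} as opposed to an arbitrary non-compact analytic set, must be established.
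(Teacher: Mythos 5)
First, a point of reference: the paper does not prove this theorem itself --- it is quoted verbatim from the companion paper \cite{CJ} and used here only as motivation for the counterexample. So there is no in-paper proof to compare against, and I can only assess your proposal on its own terms.

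Your opening is sound and is the standard first move: a $1$-convex surface is weakly $1$-complete (take $\phi=\psi\circ\sigma$ with $\sigma$ the Remmert reduction and $\psi$ a strictly plurisubharmonic exhaustion of the Stein reduction), so $\tilde\phi=\phi\circ p$ is plurisubharmonic on $\tilde X$, the maximum principle traps $g_n(\overline\Delta)$ in $p^{-1}(K_M)$, and the localisation to one connected component is harmless. The genuine gap is everything after that. The ``normal families argument'' for $f_n=p\circ g_n$ is false as stated: holomorphic maps into a fixed compact subset of a surface need not form a normal family (already the $\p^1$-valued maps $z\mapsto nz$ on $\Delta$ have no subsequence converging locally uniformly at $0$ in the spherical metric), and the set of concentration points need not be finite or even discrete. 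The rescaling/bubbling picture you invoke is Gromov compactness, which requires an a priori bound on the areas (energy) of the disks; no such bound is established or evident here, and without it you get neither convergence off a finite set, nor finite-energy rescaled limits $\C\to\tilde X$, nor the removable-singularity extension to $\p^1$ needed to conclude that the bubbles are \emph{rational}. Note that rationality is not automatic from the geometry either: a nonconstant holomorphic map $\C\to\tilde X$ with relatively compact image must land in $p^{-1}(S)$ ($S$ the exceptional set), but $S$ may have elliptic components, and the hypothesis only excludes infinite Nori strings of \emph{rational} curves, so rationality has to come out of the analysis.

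Even granting a full bubble-tree decomposition, the step you flag as ``where the real work lies'' --- that the escaping bubbles organise into a single \emph{connected, non-compact} union of compact curves rather than, say, infinitely many mutually far-apart compact components of $p^{-1}(S)$ or a single non-compact irreducible curve --- is precisely the content of the theorem, and the proposal offers no mechanism for it. The cited proof does not proceed by a compactness theorem for the disks at all: it works directly with the structure of the $1$-dimensional analytic set $p^{-1}(S)$, whose connected components are (under the hypothesis) either compact, hence exceptional with arbitrarily small strongly pseudoconvex neighbourhoods, or contain a non-compact irreducible curve, and it controls where the interiors of the disks can accumulate component by component. As it stands, your argument reduces the theorem to its hardest step without resolving it, so I cannot count it as a proof.
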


\section{The Results}

As we mentioned in the introduction, our goal is to prove the following theorem.

\begin{teo}\label{main}
There exists a 1-convex surface whose universal covering does not
satisfy the discrete disk property.
\end{teo}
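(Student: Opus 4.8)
The plan is to build the base $X$ as a concrete 1-convex surface whose exceptional curve is a single rational curve $C\cong\mathbb P^1$ with a suitably negative self-intersection or, more flexibly, a nodal/chain configuration, chosen so that $\pi_1(X)$ is nontrivial and the exceptional set ``unfolds'' in the universal covering $\tilde X$ into an infinite Nori string of rational curves. Concretely, I would start from a compact surface (or a neighborhood of a curve in one) carrying a free action of $\mathbb Z$ (or $\mathbb Z/n$ for the deck group at the curve, then pass to an infinite cyclic cover), blow down to get $X$, and arrange that the preimage $p^{-1}(C)\subset\tilde X$ is a connected infinite chain $\bigcup_{i\in\mathbb Z} C_i$ of copies of $\mathbb P^1$, each meeting the next in one point — this is the Nori string whose existence \cite{CJ} tells us is mandatory for any counterexample. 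The $1$-convexity of $X$ is guaranteed by construction (it is a modification of a Stein space at finitely many points); the universal covering $\tilde X$ is then an increasing union of strongly pseudoconvex domains, so it automatically has the continuous disk property, and the whole point is to defeat the discrete one.

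The heart of the argument is to produce the sequence $g_n:U\to\tilde X$ witnessing the failure of the discrete disk property. The idea is to take, in a fixed coordinate chart of $X$ near a smooth point of $C$, a family of holomorphic disks $D_n$ that wrap more and more tightly around $C$ and whose boundaries $\partial D_n$ stay in a fixed relatively compact set (a small tube around a loop in $X\setminus C$ transverse to the monodromy), while the disks $\overline{D_n}$ themselves puncture $C$ and run off to infinity along the Nori string after lifting. More precisely, I would choose disks in $X$ whose boundary circles converge uniformly to a fixed real curve $\gamma$ lying over a generator of the relevant piece of $\pi_1$, so that the lifts $g_n=\widetilde{D_n}$ have the property that $\bigcup_n g_n(\Delta_{1+\epsilon}\setminus\Delta)$ is relatively compact (it sits in a tube over a compact annulus in $X$), that ${g_n}_{|S^1}$ converges uniformly to a lift $\gamma$ of the loop, but that the ``centers'' $g_n(0)$ march off along the $C_i$'s: $g_n(0)\in C_{k(n)}$ with $k(n)\to\infty$. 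Then $\bigcup_n g_n(\overline\Delta)$ is not relatively compact in $\tilde X$, exactly the failure required by Definition~3.

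The main obstacle — and where I expect most of the technical work to go — is the simultaneous control of three things: (i) that the boundary circles genuinely converge uniformly in $\tilde X$ (not merely in $X$), which forces me to be careful that the loop $\gamma$ I wrap around does not itself lift to a non-closed path, so the monodromy bookkeeping of $\pi_1(X)$ versus $\pi_1(X\setminus C)$ must be done explicitly; (ii) that the disks $D_n$ in $X$ can be chosen holomorphic on a fixed neighborhood $U\supset\overline\Delta$ with a uniform $\Delta_{1+c}\subset U$, which I would achieve by writing them down in an affine chart, e.g.\ as $z\mapsto(a_n z + b_n,\ \varphi_n(z))$ in local coordinates $(w_1,w_2)$ with $C=\{w_2=0\}$, tuning $(a_n,b_n,\varphi_n)$ so that $|w_2|$ on the image tends to $0$ on $S^1$ but the image crosses $C$ inside $\Delta$; and (iii) that on passing to $\tilde X$ the images really do escape, which is where the infiniteness and connectedness of the Nori string is used — each time a disk crosses $C$ downstairs, its lift is ``kicked'' to the next component $C_{i+1}$ upstairs, and by iterating (or by choosing $D_n$ to cross $C$ with multiplicity $n$, or to have winding number $n$ about a puncture point mapping into the singular locus of the string) one forces $k(n)\to\infty$. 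Once these three are in place, the conclusion that $\tilde X$ fails the discrete disk property — and hence is not $p_5$-convex in the sense of \cite{DG} — is immediate from the definitions.
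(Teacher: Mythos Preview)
Your outline captures the right shape --- an infinite Nori string in the cover, disks whose centers march off along it while the boundaries stay put --- but it misses the central obstruction that drives almost all of the real work in the paper.

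The problem is the tension between \emph{1-convexity of the base} and \emph{existence of simple disks}. If the string $\bigcup_{k\in\Z}L_k$ (each $L_k\cong\p^1$, $L_k\cdot L_k=-2$, $L_k\cdot L_{k\pm1}=1$) covers a cycle $F_0\cup F_1$ of two $\p^1$'s meeting in two points, the intersection matrix downstairs is $\left(\begin{smallmatrix}-2&2\\2&-2\end{smallmatrix}\right)$, which is \emph{not} negative definite; hence $F_0\cup F_1$ is not exceptional and no neighborhood of it is 1-convex. (Your alternative of a single smooth rational curve fails for a different reason: a tubular neighborhood of such a curve is simply connected, so there is no nontrivial cover at all.) To get 1-convexity one is forced to blow up the base at extra points $\beta_0,\beta_1$ on $F_0,F_1$, bringing the matrix to $\left(\begin{smallmatrix}-3&2\\2&-3\end{smallmatrix}\right)$, and correspondingly blow up the cover at a $\Z$-orbit of points $\alpha_k\in L_k$.

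Once those extra blow-ups are made, your disk candidates $z\mapsto(a_nz+b_n,\varphi_n(z))$ --- which are essentially the maps $\lambda\mapsto((\lambda/2)^n,\lambda)$ the paper itself starts from --- no longer work: their proper transforms do not remain inside any fixed 1-convex neighborhood $\tilde W$ of the (now blown-up) string, because they pass too close to the new exceptional divisors over the $\alpha_k$. The paper states this explicitly and then spends its entire Step~2 constructing, for each $n$, a pair of polynomials $f_1,f_2$ built from recursively defined factors $P_1,\dots,P_{n-1}$ (with carefully tuned exponents $c_k,d_k$ and a tiny parameter $\varepsilon$), together with four lemmas of Rouch\'e and maximum-modulus estimates, precisely to force the proper transforms to avoid the cones around the $\alpha_k$ while still landing on $L_{n-1}$ at $\lambda=0$. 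None of this machinery is hinted at in your proposal, and without it the argument collapses: you can build the manifold but you cannot produce the disks.
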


We will describe first the basic idea of the proof of the theorem. We start with a basic example
of a 2-dimensional complex manifold $X$ that does not satisfy the discrete disk property and contains an 
infinite Nori string of rational curves. Let $f_n:\C\to\C^2$,
$f(\lambda)=((\frac{\lambda}2)^n,\lambda)$. We consider the complex manifold which is obtained from $\C^2$
after an infinite sequence of blow-ups. The center of a blow-up is chosen to be a point on the exceptional divisor of 
the previous blow-up. More precisely, for $k\leq n-1$ the center of the $k^{th}$ blow-up is the image of the origin
through the the proper transform of $f_n$. It is not difficult to see that the manifold thus obtained does not satisfy
the discrete disk property. It contains a Nori string $\{L_k\}_{k\geq 0}$ of curves isomorphic to $\p^1$.
Going backwards we can construct from $X$ a manifold $Y$ that contains a  Nori string 
$\{L_k\}_{k\in\Z}$ (so $L_k$ is defined also for $k<0$). Then
$\bigcup_{k\in\Z} L_k$ will cover $F_0\cup F_1$ where $F_0$ and $F_1$ are isomorphic to $\p^1$ and 
$F_0\cap F_1$ has exactly two points. An appropriately chosen neighborhood $U$ of 
$\bigcup_{k\in\Z} L_k$  in $Y$
will cover a manifold $V$ which is a neighborhood of  $F_0\cup F_1$. It is again not very hard to 
prove that $U$ 
does not  satisfy the discrete disk property. However  $F_0\cup F_1$ is not exceptional because 
the intersection matrix is
\[\left[
\begin{matrix}
 -2 & 2\\
2 & -2
\end{matrix}\right]
\]
\noindent
and then we 
have to blow-up again at two points, one on $F_0$ and one on $F_1$
(hence we have to blow-up $L_k$ accordingly) in order to make the intersection matrix 
negative defined. To show that the manifold $Z$ obtained in this way does 
not satisfy the discrete disk property is not easy anymore. A sequence of holomorphic disks defined in 
the simple-minded way as the one above will not work because their image will not stay in a small 
enough neighborhood in $Z$ of the proper transform of $\bigcup_{k\in\Z} L_k$. 

We move now to the proof of our theorem.
 
\

\noindent
{\bf Step 1.} We construct a 1-convex manifold $W$ and a covering $\tilde p:\tilde W\to W$. 
In the second step we will show that $\tilde W$ does not have the discrete disk property.

Suppose that $M$ is a two-dimensional complex manifold, $a\in M$ is a point and 
$(z_1,z_2):U\to \C^2$
is a local chart around $a$ such that $(z_1(a),z_2(a))=(0,0)$. We will define a complex manifold
${\cal I}(M,(z_1,z_2))$ as follows.

We assume first that $M=\Omega_0=\C^2$,  $(z_1,z_2)=(z_1^{(0)},z_2^{(0)})$ are the coordinate 
functions and $a=a_0=(0,0)$. 
We define $\Omega_k$ to be the manifold obtained from $\Omega_0$ by performing a sequence of 
$k$ successive 
blow-ups as follows. Let $\Omega_1$ be the blow-up of $\Omega_0$ in $a_0$. Namely 
$\Omega_1=\{(z_1^{(0)},z_2^{(0)},[\z_1^{(0)}:\z_2^{(0)}])\in 
\Omega_0\times\p^1:z_1^{(0)}\z_2^{(0)}=z_2^{(0)}\z_1^{(0)}\}$. 
Let $a_1=(0,0,[0:1])\in \Omega_1$ and $\Omega_2$ be the blow up of $\Omega_1$ in $a_1$
and let $L_0$ be the proper transform of the exceptional set of $\Omega_1$. The open subset of 
$\Omega_1$ given by $\z^{(0)}_2\neq 0$
is biholomorphic to $\C^2$ with the coordinate functions $z_1^{(1)}:=\frac{\z^{(0)}_1}{\z^{(0)}_2}$ 
and 
$z_2^{(1)}:=z_2^{(0)}$. In these coordinates $a_1$ is given by $z^{(1)}_1=0,\ z^{(1)}_2=0$. We 
continue this procedure 
$k$ times and we obtain $\Omega_k$.
In doing so we obtain also $L_0,\dots L_{k-1}$, which are complex curves each one of them isomorphic 
to $\p^1$, and 
$a_0,a_1,\dots, a_k$ the points where we are blowing up. Note that $\Omega_j\setminus \{a_j\}$ 
is an open subset of 
$\Omega_{j+1}\setminus\{a_{j+1}\}$. We set 
$${\cal I}(\Omega_0,(z_1^{(0)},z_2^{(0)})):=\cup _{j=0}^\infty \Omega_j\setminus \{a_j\}.$$
Notice that we have also a canonical map $\pi:{\cal I}(\Omega_0,(z_1^{(0)},z_2^{(0)}))\to\C^2$
such that $\pi^{-1}(0)=\bigcup_{k\geq 0}L_k$ and 
$\pi:{\cal I}(\Omega_0,(z_1^{(0)},z_2^{(0)}))\setminus \bigcup_{k\geq 0}L_k\to\C^2\setminus \{0\}$
is a biholomorphism. If $U$ is an open subset of $\C^2$ containing the origin we
set   ${\cal I}(U,(z_1,z_2)):=\pi^{-1}(U)$. Finally ${\cal I}(M,(z_1,z_2))$ is defined by gluing
$M\setminus\{a\}$ and  ${\cal I}(U,(z_1,z_2))$ via the identification of
$U\setminus\{a\}$ with ${\cal I}(U,(z_1,z_2))\setminus \bigcup_{k\geq 0}L_k$.
 
Notice that if $M$ is a complex manifold $\Omega$ is an open subset of $M$, $a$ is a point of 
$\Omega$
and $(z_1,z_2):U\to \C^2$ is a local chart around $a$, with $U\subset \Omega$ and  
$(z_1(a),z_2(a))=(0,0)$, then
$ {\cal I}(\Omega,(z_1,z_2))$ is an open subset of $ {\cal I}(M,(z_1,z_2))$

Next we will define inductively a sequence $\{X_k\}_{k\leq 0}$ of complex manifolds as follows. 
We consider $\C^2$ with coordinate functions  $(z_1^{(0)},z_2^{(0)})$ and we set 
$X_0={\cal I}(\C^2,(z_1^{(0)},z_2^{(0)}))$. To define $X_{-1}$
we let $M_{-1}$ be the blow-up of $\C^2$ at the origin, written in coordinates as follows:
$M_{-1}=\{(z_{1}^{(-1)},z_2^{(-1)},[\z_1^{(-1)}:\z_2^{(-1)}])\in
 \C^2\times\p^1:z_1^{(-1)}\z_2^{(-1)}=z_2^{(-1)}\z_1^{(-1)}\}$. Then
$\Omega_{-1}=\{(z_{1}^{(-1)},z_2^{(-1)},[\z_1^{(-1)}:\z_2^{(-1)}])\in M_{-1}:\z_2^{(-1)}\neq 0\}$ 
is an 
open set of $M_{-1}$, biholomorphic to $\C^2$ with coordinate functions $z_1^{(0)}:=
\frac{\z^{(-1)}_1}{\z^{(-1)}_2}$ and 
$z_2^{(0)}:=z_2^{(-1)}$. We set $X_{-1}={\cal I}(M_{-1},(z_1^{(0)},z_2^{(0)}))$. Notice then that 
$X_0$ is an open
subset of $X_{-1}$ and that $X_{-1}$ is biholomorphic to $X_0$. In the same way we defined $X_{-1}$ 
starting with $X_0$ 
we define $X_{k-1}$ starting with $X_k$. 

We put $X=\bigcup_{k=0}^{-\infty} X_k$ and $L=\bigcup_{k=-\infty}^{\infty} L_k$.
Notice that if $|j-k|\geq 2$ then $L_j\cap L_k=\emptyset$.

Next we want to define a fundamental system of open neighborhoods of $L_k$ for each $k\in\Z$.  
To do that we notice
that, by construction, $L_k$ is obtained as follows: we have 
$\C^2$ with coordinate functions $(z_1^{(k)},z_2^{(k)})$ we blow it up at the origin and
then we blow it up again at the point $(0,0,[0:1])$. The manifold thus obtained is denoted
by $\widehat\C^2$. Then $L_k$ is the proper transform of the exceptional
set of the first blow-up. That is we have that  $\widehat\C^2$ is given in 
$\C^2\times\p^1\times\p^1$ with coordinates
$(z_1^{(k)},z_2^{(k)},[\z_1^{(k)}:\z_2^{(k)}],[\z_1^{(k+1)}:\z_2^{(k+1)}])$ by

$$z_1^{(k)}\z_2^{(k)}=z_2^{(k)}\z_1^{(k)}, \z_1^{(k)}\z_2^{(k+1)}=
\z_1^{(k+1)}\z_2^{(k)}z_2^{(k)}
$$

In $\widehat\C^2$, $L_k$ is given by the equations $z_1^{(k)}=0$, $\z_2^{(k+1)}=0$.

For $r\in (0,1]$ we define $U_r^{(k)}:=\{|\z_2^{(k+1)}|<r|\z_1^{(k+1)}|,\ |z_1^{(k)}|<r\}$ 
and we notice that $\{ U_r^{(k)}\}_{r>0}$ 
is indeed a fundamental  system of open neighborhoods of $L_k$. Obviously $U_r^{(j)}$ and 
$U_r^{(k)}$
are biholomorphic for every $j$ and $k$.

We want to show that if $|j-k|\geq 2$ then $U_r^{(j)}\cap U_r^{(k)}=\emptyset$. It is clear 
from our construction
 that without loss of generality we can assume that $j=0$ and $k\geq 2$. As $U_r^{(j)}\cap U_r^{(k)}$
is an open set, it suffices to
 show that  $(U_r^{(0)}\setminus L)\cap (U_r^{(k)}\setminus L)=\emptyset$. 
We recall that we have defined $ z_1^{(k+1)}=\frac{\z_1^{(k)}}{\z_2^{(k)}}$ and 
$z_2^{(k+1)}=z_2^{(k)}$. Hence,
 outside $L$ and for $k\geq 0$, we have
that  $[z_1^{(k+1)}:z_2^{(k+1)}]=[\z_1^{(k)}:\z_2^{(k)}z_2^{(k)}]=
[z_1^{(k)}:z_2^{(k)}z_2^{(0)}]$. Inductively we get
$[z_1^{(k+1)}:z_2^{(k+1)}]=[z_1^{(0)}:(z_2^{(0)})^{k+2}]$. 
The inequality  $|z_1^{(k)}|<r$ is equivalent to $|\z_1^{(k-1)}|<r|\z_2^{(k-1)}|$.
As  $[\z_1^{(j)}:\z_2^{(j)}]= [z_1^{(j)}:z_2^{(j)}]$ for every $j\in \Z$ and every point in 
$X\setminus L$ it follows that
$$ U_r^{(k)}\setminus L= \{(z_1^{(0)}, z_2^{(0)})\in\C^2:
\ |z_2^{(0)}|^ {k+2}<r|z_1^{(0)}|, |z_1^{(0)}|< r|z_2^{(0)}|^ k\}.$$

We have that $ U_r^{(0)}\setminus L= 
\{(z_1^{(0)}, z_2^{(0)})\in\C^2:\ |z_2^{(0)}|^ {2}<r|z_1^{(0)}|, |z_1^{(0)}|< r\}.$ 
In particular every point of  $ U_r^{(0)}\setminus L$ satisfies $|z_2^{(0)}|^ {2}<r|z_1^{(0)}|<r^2$, 
hence $|z_2^{(0)}|<r$.
Then
a point in the intersection   $(U_r^{(0)}\setminus L)\cap (U_r^{(k)}\setminus L)$ would satisfy 
$|z_2^{(0)}|^ {2}<r|z_1^{(0)}|<
r^2|z_2^{(0)}|^ k$. As $k\geq 2$ we get $1<r^2|z_2^{(0)}|^ {k-2}<r^k$ and this contradicts 
our choice of $r\leq 1$. 

It is clear that the mapping $(z_1^{(k)},z_2^{(k)},[\z_1^{(k)}:\z_2^{(k)}],
[\z_1^{(k+1)}:\z_2^{(k+1)}])\rightarrow
(z_1^{(j)},z_2^{(j)},[\z_1^{(j)}:\z_2^{(j)}],[\z_1^{(j+1)}:\z_2^{(j+1)}])$ induces a 
biholomorphism of 
$p_{k,j}: U_r^{(k)}
\rightarrow U_r^{(j)}$. Let $Y:=U_1^{(0)}\cup U_1^{(1)}/\sim$, where $\sim$ identifies 
$U_1^{(2)}\cup U_1^{(1)}$ to $U_1^{(-1)}\cup U_1^{(0)}$ via $p_{2,0}$. 
Let ${\cal U}= \bigcup_{k\in\Z} U_1^{(k)}$
and $p:{\cal U}\rightarrow Y$ be the map given by $p_{|U_1^{(2k)}}=p_{2k,0}$ and
$p_{|U_1^{(2k+1)}}=p_{2k+1,1}$. Clearly $p$ is a covering map. $F_0:=p(L_0)$ and $F_1:=p(L_1)$ 
are both biholomorphic
to $\p^1$ and, moreover, we have $F_0\cdot F_0=-2$, $F_1\cdot F_1=-2$, 
$F_0\cdot F_1=2$. Let $\alpha_k\in L_k$ be the point given by
$(z_1^{(k)},z_2^{(k)},[\z_1^{(k)}:\z_2^{(k)}],[\z_1^{(k+1)}:\z_2^{(k+1)}])= (0,0,[1:1],[1:0])$ and 
$\beta_0,\beta_1\in Y$
the points $\beta_0=p(\alpha_{2k})$, $\beta_1=p(\alpha_{2k+1})$. We let $\pi: \tilde Y\to Y$ to be the blow up of 
$Y$ at $\beta_0$ and
$\beta_1$ and we denote by $\tilde F_0$ and $\tilde F_1$ respectively the proper transforms of 
$F_0$ and $F_1$.
Note that $\tilde F_0\cdot \tilde F_0=-3$, $\tilde F_1\cdot\tilde  F_1=-3$, $\tilde F_0\cdot\tilde  F_1=2$. 
As the intersection matrix 
\[\left[
\begin{matrix}
 -3 & 2\\
2 & -3
\end{matrix}\right]
\]
\noindent
 is negative
definite, it follows, see \cite{Gr}, that $\tilde F:=\tilde F_0\cup \tilde F_1$ is exceptional. We consider the 
following diagram:
$$\begin{CD}
\tilde{\cal U} @>\tilde \pi >> {\cal U}\\
 @V{\tilde p}VV   @V{p}VV\\
\tilde Y@>\pi >>Y
\end{CD}$$

We let $\tilde p:\tilde{\cal U}\rightarrow \tilde Y$ be the pull-back of $p$. Clearly 
$\tilde p$ is a covering map
and $\tilde \pi:\tilde{\cal U}\to{\cal U}$ is obtained by blowing-up $\cal U$ at every $\alpha_k$, $k\in\Z$. 
We choose now $W$ a 1-convex 
neighborhood of $\tilde F$ and we put $\tilde W:=\tilde p^{-1}(W)$, $\tilde L:=\tilde p^{-1}(\tilde F)$.
If $\tilde L_k$ is the proper transform of $L_k$ in $\tilde{\cal U}$ then $\tilde L=\bigcup \tilde L_k$. 
We will show that $\tilde W$ does not have the discrete disk property. In our construction of the sequence
of holomorphic discs we want to make sure that their image stays in $\tilde W$. To do that we need a ``concrete''
open neighborhood of $\tilde L$ in $\tilde W$. To obtain it  we consider $\{\tilde W^{(k)}_{r,\rho}\}$ 
a fundamental system of neighborhoods for $\tilde L_k$, each one of them being actually the preimage via 
$\tilde \pi$ of a cone centered at $\alpha_k$. Moreover $p_{k,j}$ induces a biholomorphism $\tilde W^{(k)}_{r,\rho}
\to\tilde W^{(j)}_{r,\rho}$. The construction is as follows.

We have the following description
of the blow-up of $U^{(k)}_1$ in $\alpha_k$: it is the set $\tilde U^{(k)}_1$ of all
$$(z_1^{(k)},z_2^{(k)},[\z_1^{(k)}:\z_2^{(k)}],[\z_1^{(k+1)}:\z_2^{(k+1)}],[w_1:w_2])
\in \C^2\times\p^1\times\p^1\times\p^1$$
such that 
$$z_1^{(k)}\z_2^{(k)}=z_2^{(k)}\z_1^{(k)},\ \z_1^{(k)}\z_2^{(k+1)}=
\z_1^{(k+1)}\z_2^{(k)}z_2^{(k)},\ w_2 z_1^{(k)}\z_1^{(k)}=w_1(\z_1^{(k)}-\z_2^{(k)})$$
and $$|z_1^{(k)}|<1,\
|\z_2^{(k+1)}|<|\z_1^{(k+1)}|$$
The proper transform of $L_k$ is given by $z_1^{(k)}=0$, $\z_2^{(k+1)}=0$, $w_1=0$.
A fundamental system of neighborhoods for $\tilde L_k$ is given by $$\tilde W^{(k)}_{r,\rho}=
\{|z_1^{(k)}|<r,\
|\z_2^{(k+1)}|<r|\z_1^{(k+1)}|,\ |w_1|<\rho|w_2|\}\subset\tilde U^{(k)}_1. $$
There exist then $\rho>0$ and $r>0$ such that $\tilde W_{r}^\rho=\bigcup_{k\in\Z} 
\tilde W^{(k)}_{r,\rho}\subset \tilde W $.
\nn
If we denote by $W_{r}^\rho\subset {\cal U}$ the set
$$ \bigcup_{k\in \Z}\{(z_1^{(k)},z_2^{(k)},[\z_1^{(k)}:\z_2^{(k)}],
[\z_1^{(k+1)}:\z_2^{(k+1)}])\in U_r^{(k)}: 
|z_1^{(k)}\z_1^{(k)}|<\rho |\z_2^{(k)}-\z_1^{(k)}|\}$$
\noindent
we have that $\tilde W\setminus\tilde L\supset \tilde W_{r}^\rho\setminus \tilde L 
\supset W_{r}^\rho\setminus L$. We notice at the same time
that keeping $\rho\in(0,1)$ fixed and choosing a small enough $r>0$ we have that 
$\tilde W^{(k)}_{r,\rho}\cap \tilde W^{(k+1)}_{r,\rho}=U_r^{(k)}\cap U_r^{(k+1)}$ for 
every $k\in\Z$. We fix such an $r\in(0,1)$ that satisfies also $r\leq \frac{\rho}2(1-r)$.

\
\nn
{\bf Step 2.} We construct a sequence of holomorphic discs that proves that $\tilde W$
does not have the discrete disk property. 

We fix $n\in\N$. To define our $n^{th}$ holomorphic disk, $g_n$, we will start with two 
polynomial functions $f_1=f_1^{(n)}$ and 
$f_2=f_2^{(n)}$ and $g_n$ will be the proper transform of $(f_1,f_2):\C\to\Omega_0$ 
restricted to a neighborhood of  $\overline\Delta_2$ (we recall that $\Omega_0$ was defined as $\C^2$ with coordinate 
functions $(z_1^{(0)},z_2^{(0)})$). This proper transform is considered after all the blow-ups we made, i.e. 
first at the points $\{a_j\}_{j\in\Z}$ and then $\{\alpha_j\}_{j\in\Z}$.

Let $c_1,\dots, c_{n-1}$ be integers defined recursively by $c_1=1$ and, for $k\geq 2$, 
$c_k=2k-1+(k-1)c_1+(k-2)c_2+\cdots c_{k-1}$. We also consider $d_1,\dots,d_{n-1}$ positive
integers defined by $d_{n-1}=1$ and, for $k\leq n-2$, $d_k=d_{k+1}+2d_{k+2}+\cdots
(n-k-1)d_{n-1}+ n-k$.  Let $N=2n(d_1+d_2+\dots + d_{n-1}+1)$.

We define $f_1$ and $f_2$ as
$$f_1(\lambda)=\e P_1(\lambda)P_2^2(\lambda)\cdots P_{n-1}^{n-1}(\lambda)\cdot\lambda^n,$$  
$$f_2(\lambda)=\e^2 P_1(\lambda)P_2(\lambda)\cdots P_{n-1}(\lambda)\cdot\lambda$$
where $\e$ is a positive real number that satisfies $\e<(\frac 16)^N\frac 1{n+2}r$ and
$P_1,\dots,P_{n-1}$ are polynomials defined recursively by 
$P_{n-1}(\lambda)=\e^{c_{n-1}}-\lambda$ and, for $k\leq n-2$,
$P_k(\lambda)=\e^{c_k}-
P_{k+1}(\lambda)\cdot P_{k+2}^2(\lambda)\cdots P_{n-1}^{n-k-1}(\lambda)\cdot \lambda^{n-k}$

\
\nn
{\bf Remarks:} 1) $P_k(0)\neq 0$ and $P_j$ and $P_k$ have no common zero for $j\neq k$. 
 \nn
2) Each $P_k$ is a monic polynomial of degree $d_k$. 

\

There are four conditions that we want the sequence 
$\{g_n\}$ to satisfy:
\nn
I) $g_n(\overline \Delta_2)\subset \tilde W$. We will prove in fact that $g_n(\overline \Delta_2)\subset 
\tilde W_r^\rho$.
\nn
II) $\bigcup_{n\geq 1} g_n(\Delta_{2}\setminus \Delta)$ is 
relatively compact in $\tilde W$ 
\nn
III) ${g_n}_{|S^1}$ is uniformly convergent 
\nn
IV) $\bigcup_{n\geq 1}g_n(\overline \Delta)$ is not relatively compact in $\tilde W$.

\
\nn
$\bullet$ Because $P_k(0)\neq 0$, the definition of $f_1$ and $f_2$ implies that the origin $0\in\C$ is a zero
of order $1$ for $f_2$ and a zero of order $n$ for $f_1$. This implies that
$g_n(0)\in L_{n-1}$ and this shows that $\{g_n(0)\}_{n\geq 1}$ is not relatively compact in $X$.
Hence  $\{g_n\}$ satisfies property IV).

\
\nn
$\bullet$ We will prove next that $\{g_n\}$ satisfies properties II) and III). 
\nn
Let $K_n:=
\{(z_1,z_2,[\z_1:\z_2]\in\Omega_1:|z_1|\leq\frac 1n,\ |z_2|\leq\frac 1n,\ 
|\z_2|\leq\frac 1n |\z_1|\}$. Note that $K_n$ is a compact subset of $X$,
$K_n\supset K_{n+1}$, and $\cap_{n\geq 1} K_n=\{(0,0,[1:0])\}$. Hence for $n$ large enough
$K_n\subset \tilde W$. Therefore if we show that 
$g_n(\{\lambda\in\C:1\leq |\lambda|\leq 2\})\subset K_n$
then we will prove both I) and II). 

\begin{lem}\label{roots}
For $k\in\{1,\dots,n-1\}$,  if $P_k(\lambda)=0$ then 
$|\lambda|<\frac{1}{2^k}$.
\end{lem}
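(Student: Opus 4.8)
The plan is to prove the estimate by \emph{decreasing} induction on $k$, running from $k=n-1$ down to $k=1$; at stage $k$ the inductive hypothesis is that every zero of $P_j$ lies in $\{|\lambda|<2^{-j}\}$ for all $j$ with $k<j\le n-1$. The base case $k=n-1$ is immediate: $P_{n-1}(\lambda)=\e^{c_{n-1}}-\lambda$ has the single zero $\e^{c_{n-1}}$, and since $c_{n-1}\ge 1$ and $N=2n(d_1+\cdots+d_{n-1}+1)\ge 2n$ we get $\e^{c_{n-1}}\le\e<(\frac16)^N\le 36^{-n}<2^{-(n-1)}$.

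For the inductive step I would fix $k\le n-2$ and a zero $\lambda_0$ of $P_k$. The recursive definition of $P_k$ yields the identity
$$\e^{c_k}=\lambda_0^{n-k}\prod_{j=k+1}^{n-1}P_j(\lambda_0)^{j-k},\qquad\text{so that}\qquad \e^{c_k}=|\lambda_0|^{n-k}\prod_{j=k+1}^{n-1}|P_j(\lambda_0)|^{j-k}.$$
The key step is then to argue by contradiction: suppose $|\lambda_0|\ge 2^{-k}$. Since $P_j$ has degree $d_j$ and leading coefficient of modulus $1$ (Remark 2, or directly from the recursion), $|P_j(\lambda_0)|$ is the product of the $d_j$ distances from $\lambda_0$ to the zeros of $P_j$; by the inductive hypothesis each such zero has modulus $<2^{-j}\le 2^{-(k+1)}$, so every one of those distances exceeds $2^{-k}-2^{-(k+1)}=2^{-(k+1)}$, giving $|P_j(\lambda_0)|\ge 2^{-(k+1)d_j}$. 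Together with $|\lambda_0|^{n-k}\ge 2^{-k(n-k)}$ and the identity $\sum_{j=k+1}^{n-1}(j-k)d_j=d_k-(n-k)$ (read off from the recursion defining the $d_j$), the displayed equality forces
$$\e^{c_k}\ \ge\ 2^{-k(n-k)}\,2^{-(k+1)(d_k-(n-k))}\ =\ 2^{(n-k)-(k+1)d_k}\ >\ 2^{-(k+1)d_k}.$$

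To finish I would observe that $c_k\ge 1$ gives $\e^{c_k}\le\e$, while $k+1\le 2n$ and $d_k\le d_1+\cdots+d_{n-1}+1$ give $(k+1)d_k\le N$; hence
$$\e^{c_k}\ \le\ \e\ <\ (\tfrac16)^N\ \le\ (\tfrac16)^{(k+1)d_k}\ <\ 2^{-(k+1)d_k},$$
contradicting the previous display. Therefore $|\lambda_0|<2^{-k}$, which closes the induction.

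There is no real conceptual obstacle here: the whole argument rests on the product identity coming from the definition of $P_k$ together with the decreasing induction. The only thing needing care is the exponent bookkeeping---checking that $P_j$ has leading coefficient of modulus $1$, verifying $\sum_{j=k+1}^{n-1}(j-k)d_j=d_k-(n-k)$ against the stated recursion for $d_k$, and confirming that the choice $N=2n(d_1+\cdots+d_{n-1}+1)$ is precisely what makes $\e<2^{-(k+1)d_k}$ for every relevant $k$. Notably, the integers $c_k$ enter only through the trivial inequality $c_k\ge 1$.
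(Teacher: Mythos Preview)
Your proof is correct and follows the same decreasing induction as the paper, with the same core estimate: bounding $|P_j(\lambda)|$ from below by $2^{-(k+1)d_j}$ via the factored form of a monic polynomial, and then comparing the resulting lower bound on the product $\lambda^{n-k}\prod_{j>k}P_j(\lambda)^{j-k}$ with $\e^{c_k}$.

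The one genuine methodological difference is how the comparison is deployed. The paper works on the circle $|\lambda|=2^{-k}$, shows there that $|P_{k+1}\cdots P_{n-1}^{n-k-1}\lambda^{n-k}|\ge 2^{-N}>\e^{c_k}$, and then invokes Rouch\'e's theorem to conclude that $P_k$ and $P_{k+1}\cdots P_{n-1}^{n-k-1}\lambda^{n-k}$ have the same number of zeros in the disk $\{|\lambda|<2^{-k}\}$. You instead evaluate the recursive identity directly at a hypothetical zero $\lambda_0$ with $|\lambda_0|\ge 2^{-k}$ and obtain a contradiction without Rouch\'e. Your route is slightly more elementary; the paper's route, on the other hand, localizes \emph{all} $d_k$ zeros of $P_k$ in one stroke. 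The exponent bookkeeping (your identity $\sum_{j>k}(j-k)d_j=d_k-(n-k)$ and the bound $(k+1)d_k\le N$) is exactly what underlies the paper's assertion that the product is $\ge 2^{-N}$, so the two arguments are really the same estimate packaged differently.
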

\begin{proof}
We will prove our assertion by backward induction on $k$. For $k=n-1$ the statement is obvious.
We assume that we have proved our assertion for $j\geq k+1$ and we prove it for $k$.
For $j\geq k+1$ as $P_j$ are monic polynomials and all they zeros are inside the disk 
$\{|\lambda\in\C:|\lambda|<\frac 1{2^j}\}\subset \{\lambda\in\C:|\lambda|<\frac 1{2^{k+1}}\}$
we have that, for every $\lambda\in\C$ with $|\lambda|=\frac 1{2^k}$,
$|P_j(\lambda)|\geq(\frac 12)^{d_j(k+1)}$ (see for example the proof of the next Corollary). It follows that 
$|P_{k+1}(\lambda)\cdot P_{k+2}^2(\lambda)\cdots P_{n-1}^{n-k-1}(\lambda)\cdot \lambda^{n-k}|
\geq \frac 1{2^N}>\e>\e^{c_k}$
for  $|\lambda|=\frac 1{2^k}$. Rouch\'e's theorem (see e.g. \cite{Na} page 106) implies that $P_k(\lambda)$ and 
$P_{k+1}(\lambda)\cdot P_{k+2}^2(\lambda)\cdots P_{n-1}^{n-k-1}(\lambda)\cdot \lambda^{n-k}$
have the same number of zeros inside the disk $\{\lambda\in\C:|\lambda|<\frac 1{2^k}\}$. As the two 
polynomials have the same degree and all the zeros of the second one are in this disk, it
follows that all the zeros of $P_k$ are in there as well.
\end{proof}

\begin{cor}\label{bounds}
If $\lambda\in\C$ satisfies $1\leq |\lambda|\leq 2$
then $(\frac 12)^{d_k}< |P_k(\lambda)|< 3^{d_k}$
\end{cor}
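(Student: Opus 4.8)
The plan is to read off the estimate directly from the location of the zeros of $P_k$ supplied by Lemma \ref{roots}, together with the fact recorded in Remark 2 that $P_k$ is monic of degree $d_k$. Accordingly, I would first factor
\[
P_k(\lambda) \;=\; \prod_{j=1}^{d_k}(\lambda - \mu_j),
\]
where $\mu_1,\dots,\mu_{d_k}$ are the zeros of $P_k$ repeated according to multiplicity (there are exactly $d_k$ of them, since $P_k$ is monic of degree $d_k$), and by Lemma \ref{roots} each satisfies $|\mu_j| < 2^{-k} \leq \frac12$.

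Next, for any $\lambda$ with $1 \leq |\lambda| \leq 2$ and each index $j$, I would apply the triangle inequality (and its reverse) to a single factor:
\[
\frac12 \;<\; 1 - |\mu_j| \;\leq\; |\lambda| - |\mu_j| \;\leq\; |\lambda - \mu_j| \;\leq\; |\lambda| + |\mu_j| \;\leq\; 2 + |\mu_j| \;<\; 3 .
\]
Taking the product of these inequalities over $j = 1,\dots,d_k$ gives at once $(\frac12)^{d_k} < |P_k(\lambda)| < 3^{d_k}$, which is the assertion.

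There is essentially no obstacle to overcome here: the corollary is just the multiplicative form of the triangle inequality applied on the annulus $1 \leq |\lambda| \leq 2$, once Lemma \ref{roots} has confined all the zeros of $P_k$ to the disk of radius $2^{-k}$. The only point requiring a little care is to count the zeros with multiplicity so that the number of factors equals the degree $d_k$, which is exactly what the monicity of $P_k$ from Remark 2 guarantees; this is also precisely the elementary device already invoked inside the proof of Lemma \ref{roots} to bound $|P_j(\lambda)|$ from below on the circle $|\lambda| = 2^{-k}$.
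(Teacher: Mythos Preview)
Your argument is correct and follows exactly the same approach as the paper: factor the monic polynomial $P_k$ over its roots, use Lemma~\ref{roots} to place all roots inside the disk of radius $\tfrac12$, and then bound each linear factor on the annulus $1\le|\lambda|\le2$ by the triangle inequality before multiplying. The only difference is cosmetic---you spell out the triangle-inequality chain a bit more explicitly than the paper does.
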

\begin{proof}
Because $P_k$ is a monic polynomial of degree $d_k$ we have that it is of the form 
$P_k(\lambda)=(\lambda-\lambda^{(k)}_1)\cdots (\lambda-\lambda^{(k)}_{d_k}) $
where $\lambda^{(k)}_j$ are its roots (counted with multiplicity). Lemma \ref{roots} implies that 
$|\lambda^{(k)}_j|<\frac{1}{2^k}\leq\frac 12$ and therefore for $1\leq |\lambda|\leq 2$ we have that
$\frac 12<|\lambda^{(k)}_j-\lambda|<2+\frac 12<3$.
  
\end{proof}

Given our choice of $\e$ and Corollary \ref{bounds}, a simple computation shows:
\begin{cor}\label{ineq}
If $\lambda\in\C$ satisfies $1\leq |\lambda|\leq 2$
then we have:
\nn
a) $|f_1(\lambda)|<\frac 1n r\leq \frac 1n$, 
\nn
b) $|f_2(\lambda)|<\frac 1n r^2\leq \frac 1n$,
\nn
c) $|f_2(\lambda)|<\frac 1n |f_1(\lambda)|$,
\nn
d) $|f_1(\lambda)|>|f_2(\lambda)|^k$ for every $k\geq 1$.
\end{cor}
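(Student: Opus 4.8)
The plan is to read off all four inequalities a)--d) directly from Corollary \ref{bounds} (which bounds each factor by $2^{-d_k}<|P_k(\lambda)|<3^{d_k}$ on $1\le|\lambda|\le2$), from $|\lambda|\le2<3$, and from the smallness hypothesis $\e<(\frac16)^N\frac1{n+2}r$. The whole argument is bookkeeping with exponents, and the definition $N=2n(d_1+\cdots+d_{n-1}+1)$ is tailored precisely so that every exponent of $2$ or $3$ that shows up below is $\le N$. Note also that $(\frac16)^N\frac1{n+2}<1$, so in particular $\e<r$.

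For a), I would substitute Corollary \ref{bounds} and $|\lambda|<3$ into the formula for $f_1$:
\[
|f_1(\lambda)|=\e\,|P_1(\lambda)|\,|P_2(\lambda)|^2\cdots|P_{n-1}(\lambda)|^{n-1}\,|\lambda|^n<\e\cdot 3^{\,d_1+2d_2+\cdots+(n-1)d_{n-1}+n},
\]
then observe $d_1+2d_2+\cdots+(n-1)d_{n-1}+n\le(n-1)(d_1+\cdots+d_{n-1})+n\le n(d_1+\cdots+d_{n-1}+1)\le N$, so $|f_1(\lambda)|<\e\cdot 3^N<\e\cdot 6^N<\frac r{n+2}<\frac rn\le\frac1n$. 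Part b) goes the same way: $|f_2(\lambda)|<\e^2\cdot 3^{\,d_1+\cdots+d_{n-1}+1}<\e^2\cdot 6^N=\e\,(\e\cdot 6^N)<\e\cdot\frac r{n+2}$, and since $\e<r$ this is $<\frac{r^2}{n+2}<\frac{r^2}{n}\le\frac1n$.

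For c) I would use that $f_1$ has no zero on $1\le|\lambda|\le2$ (all zeros of the $P_k$ lie in $\{|\lambda|<1/2\}$ by Lemma \ref{roots}, and $\lambda\neq0$), so the ratio makes sense, and the explicit formulas give
\[
\frac{f_2(\lambda)}{f_1(\lambda)}=\frac{\e}{P_2(\lambda)P_3^2(\lambda)\cdots P_{n-1}^{n-2}(\lambda)\,\lambda^{\,n-1}}.
\]
Bounding the denominator below by $2^{-(d_2+2d_3+\cdots+(n-2)d_{n-1})}$ (using $|P_k(\lambda)|>2^{-d_k}$ and $|\lambda|\ge1$), and checking this exponent is again $\le N$, I obtain $|f_2(\lambda)/f_1(\lambda)|<\e\cdot 2^N<\e\cdot 6^N<\frac r{n+2}<\frac1n$, which is c). Finally d) is formal: by b), $|f_2(\lambda)|<\frac1n\le1$, hence $|f_2(\lambda)|^k\le|f_2(\lambda)|$ for every $k\ge1$; and by c), $|f_2(\lambda)|<\frac1n|f_1(\lambda)|\le|f_1(\lambda)|$; combining, $|f_2(\lambda)|^k\le|f_2(\lambda)|<|f_1(\lambda)|$.

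I do not expect any genuine obstacle here, as the statement itself advertises: the proof is the elementary estimates above. The only thing to watch is the exponent tallies, the worst being $d_1+2d_2+\cdots+(n-1)d_{n-1}+n$ in a); the factor $2n$ in the definition of $N$ leaves a comfortable margin. The extra factor $\frac1{n+2}r$ in the bound on $\e$ is there merely to upgrade the crude ``$<\frac1n$'' estimates to the slightly sharper forms ``$<\frac1n r$'' and ``$<\frac1n r^2$'' that Step 2 needs when placing the discs inside $\tilde W_r^\rho$.
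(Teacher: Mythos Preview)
Your proposal is correct and is precisely the ``simple computation'' the paper alludes to: bounding each $|P_k(\lambda)|$ via Corollary~\ref{bounds}, checking that the resulting exponents of $3$ (or $2$) stay below $N$, and then absorbing everything into the hypothesis $\e<(\tfrac16)^N\tfrac{r}{n+2}$. The paper gives no further details beyond that one-line remark, so your write-up is simply the fleshed-out version of the intended argument.
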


As $f_1$ and $f_2$ have no zero inside 
$\{\lambda\in\C:1\leq| \lambda|\leq 2\}$ this last Corollary implies that 
$g_n(\{\lambda\in\C:1\leq |\lambda|\leq 2\})\subset K_n$.

$\bullet$ We move now to the proof of property I). 
\nn
Let ${\cal Z}=\{0\}\cup\{\lambda\in\C: \exists k \text{ such that }P_k(\lambda)=0\}$ and we 
make the obvious remark that $f_1(\lambda)=0$ if and only if  $f_2(\lambda)=0$
if and only if $\lambda\in{\cal Z}$. 

We we will show first that $(f_1,f_2)(\overline\Delta_2\setminus {\cal Z})\subset 
\bigcup_{k\geq 0}U_r^{(k)}\setminus L\subset 
{\cal U}\setminus L$.
\nn
We have seen that $ U_r^{(k)}\setminus L= \{(z_1^{(0)}, z_2^{(0)})\in\C^2:
|z_1^{(0)}|< r|z_2^{(0)}|^ k,\ |z_2^{(0)}|^ {k+2}<r|z_1^{(0)}|\}.$ We prove that
$\bigcup_{k\geq 0} U_r^{(k)}\setminus L
\supset \{(z_1^{(0)}, z_2^{(0)})\in\C^2:
|z_1^{(0)}|< r,\ |z_2^{(0)}|< r^2\} \setminus\{(z_1^{(0)}, z_2^{(0)})\in\C^2:z_1^{(0)}=0\}$. 
This inclusion together with Corollary \ref{ineq} implies that indeed 
$(f_1,f_2)(\overline\Delta_2\setminus {\cal Z})\subset {\cal U}\setminus L$. 
Let $z_1^{(0)}, z_2^{(0)}\in\C$ be such that $0<|z_1^{(0)}|<r$ and $|z_2^{(0)}|<r^2$. 
If $z_2^{(0)}=0$ then obviously $(z_1^{(0)}, z_2^{(0)})\in U_r^{(0)}\setminus L$. Otherwise
we notice that $\frac{ |z_2^{(0)}|^{k+2}}{r}< r |z_2^{(0)}|^k$ (we have assumed that $r<1$) and we
let $I_k:=(\frac{ |z_2^{(0)}|^{k+2}}{r}, r |z_2^{(0)}|^k)\subset\R$. As 
$\frac{ |z_2^{(0)}|^{k+2}}{r}< r |z_2^{(0)}|^{k+1}$ it follows that $I_k\cap I_{k+1}\neq\emptyset$.
At the same time $I_0=(\frac{|z_2^{(0)}|^2}{r}, r)$ and $\lim_{k\to\infty}\frac{ |z_2^{(0)}|^{k+2}}{r}=0$.
This implies that $\bigcup_{k\geq 0} I_k=(0,r)$ and therefore $|z_1^{(0)}|\in \bigcup_{k\geq 0} I_k$.

Moreover we claim that $(f_1,f_2)(\overline\Delta_2\setminus {\cal Z})\subset 
\bigcup_{k= 0}^{n-1} U_r^{(k)}\setminus L$. To prove this it is enough to show that for $k\geq n$ one has
$|f_1(\lambda)|\geq r|f_2(\lambda)|^k$. However from Corollary \ref{ineq}, d) we have that
$|f_1(\lambda)|>|f_2(\lambda)|^k> r |f_2(\lambda)|^k$ for  $1\leq |\lambda|\leq 2$. As 
$\frac{f_2(\lambda)^k}{f_1(\lambda)}$ is a holomorphic function for $k\geq n$, the maximum modulus principle
implies that the inequality is valid on $\overline\Delta_2$.

Notice now that outside $L$ the inequality $|z_1^{(k)}\z_1^{(k)}|<\rho |\z_2^{(k)}-\z_1^{(k)}|$ is 
equivalent to
$|z_1^{(k)}|^2<\rho |z_2^{(k)}-z_1^{(k)}|$. 
As $[z_1^{(k)}:z_2^{(k)}]=[z_1^{(0)}:(z_2^{(0)})^{k+1}]$ the last inequality
is equivalent to $|z_1^{(0)}|^2<\rho |(z_2^{(0)})^{k+1}-z_1^{(0)}|\cdot|z^{(0)}_2|^k$. 
Given the description of $U_r^{(k)}\cap W_r^\rho$ obtained above it suffices then to show that for every 
$\lambda\in\overline\Delta_2
\setminus{\cal Z}$ that satisfies $|f_1(\lambda)|<r|f_2(\lambda)|^k$ and
$|f_2(\lambda)|^{k+2}<r|f_1(\lambda)|$ and every $k$ with $0\leq k\leq n-1$ we have
 $|f_1(\lambda)|^2<\rho|f_2(\lambda)^{k+1}-f_1(\lambda)|\cdot|f_2(\lambda)|^k$. 
We will distinguish two cases: $k\geq 1$ and $k=0$.
For $k\geq 1$ let
$A_k=\{\lambda\in \Delta_2: |f_1(\lambda)|<r|f_2(\lambda)|^k\}$ which is an open
subset of $\C$. Notice that due to Corollary \ref{ineq} we have that $A_k$ is relatively compact in $\Delta_2$
and therefore on $\partial A_k$ we have that $|f_1(\lambda)|=r|f_2(\lambda)|^k$.
Moreover, if $j\leq k-1$ and $P_j(\lambda)=0$, then $\lambda\not\in\overline A_k$. Hence $\frac{1}{P_j}$ is holomorphic on
a neighborhood of $\overline A_k$

\begin{lem}\label{div}
$P_k(\lambda)$ is a divisor of $\e^{2k-1}\cdot P_1^{k-1}(\lambda)\cdot P_2^{k-2}(\lambda)\cdots P_{k-1}(\lambda)- 
P_{k+1}(\lambda)\cdot P_{k+2}^2(\lambda)\cdots
P_{n-1}^{n-k-1}(\lambda)\cdot \lambda^{n-k}$.

\end{lem}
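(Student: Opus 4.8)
The plan is to prove the divisibility by exhibiting, modulo $P_k$, the expression
\[
\e^{2k-1}\cdot P_1^{k-1}(\lambda)\cdot P_2^{k-2}(\lambda)\cdots P_{k-1}(\lambda)-
P_{k+1}(\lambda)\cdot P_{k+2}^2(\lambda)\cdots P_{n-1}^{n-k-1}(\lambda)\cdot \lambda^{n-k}
\]
as a telescoping combination of the defining recursions for $P_1,\dots,P_k$. First I would record the basic relation coming from the definition of $P_j$ for $2\le j\le k$:
\[
P_{j+1}(\lambda)\cdot P_{j+2}^2(\lambda)\cdots P_{n-1}^{n-j-1}(\lambda)\cdot\lambda^{n-j}
=\e^{c_j}-P_j(\lambda),
\]
so that modulo $P_k$ one may substitute $\e^{c_j}$ for the full product $P_{j+1}P_{j+2}^2\cdots P_{n-1}^{n-j-1}\lambda^{n-j}$ whenever $j\ge k$ (and for $j=k$ itself the product equals $\e^{c_k}-P_k\equiv\e^{c_k}$). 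The target difference is exactly $\e^{2k-1}P_1^{k-1}P_2^{k-2}\cdots P_{k-1}$ minus the $j=k$ product, so modulo $P_k$ it becomes $\e^{2k-1}P_1^{k-1}\cdots P_{k-1}-\e^{c_k}$; hence it suffices to show $c_k$ can be read off so that $\e^{2k-1}P_1^{k-1}(\lambda)\cdots P_{k-1}(\lambda)\equiv\e^{c_k}\pmod{P_k}$, and more precisely that this product is itself congruent to a pure power of $\e$.

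The cleanest route is a secondary backward induction, or rather a direct substitution chain: I would show that modulo $P_k$ each $P_j$ with $1\le j\le k-1$ is congruent to a product of powers of $\e$ and of the later $P_i$'s, using the recursion $P_j=\e^{c_j}-P_{j+1}P_{j+2}^2\cdots P_{n-1}^{n-j-1}\lambda^{n-j}$ repeatedly. Concretely, split the product $P_{j+1}P_{j+2}^2\cdots$ into the part with indices $\le k$ (which, modulo $P_k$, collapses via the relations above, since for $k\le i\le n-1$ we can replace the tail product $P_{i+1}\cdots\lambda^{n-i}$ by $\e^{c_i}$ up to a multiple of $P_k$) — one has to be a little careful because the substitution for index $i=k$ introduces $\e^{c_k}-P_k$, so only the $\e^{c_k}$ survives mod $P_k$, but the factors with index strictly between $j$ and $k$ remain as genuine polynomials $P_{j+1},\dots,P_{k-1}$ that still need to be reduced. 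So the induction is: assume $P_{j+1}\equiv\e^{e_{j+1}}\pmod{P_k}$ for some integer exponent $e_{j+1}$ (and similarly $P_{j+2},\dots,P_{k-1}$); plug into the recursion for $P_j$; the tail product beyond index $k$ contributes $\e^{c_k}$ (mod $P_k$) while $P_{j+1}^{?}\cdots P_{k-1}^{?}\lambda^{n-j}$ — wait, $\lambda$ does not reduce to a power of $\e$. This is the subtlety to watch.

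Resolving that subtlety is the main obstacle, and I expect it is handled precisely by the arithmetic definition of the $c_k$: the recursion $c_k=2k-1+(k-1)c_1+(k-2)c_2+\cdots+c_{k-1}$ is built so that the powers of $\e$ balance, but the $\lambda^{n-k}$ factors must cancel for a different structural reason — namely that $\lambda^{n-j}$ divides both terms being subtracted (note $f_1,f_2$ and each $P_k$ vanish on $\mathcal Z$, and $0\in\mathcal Z$), so the difference in Lemma~\ref{div} is divisible by a suitable power of $\lambda$ as well, and one should track the $\lambda$-power and the $\e$-power separately. I would therefore organize the proof as: (1) fix $k$ and work in the ring $\C[\lambda]/(P_k)$; (2) prove by backward induction on $j$ from $k-1$ down to $1$ that $P_j\equiv \e^{m_j}\,\lambda^{\,\ell_j}\cdot(\text{unit})\pmod{P_k}$ with explicitly computed exponents $m_j,\ell_j$, using the recursion and the observation that mod $P_k$ every tail product with top index $\ge k$ collapses to $\e^{c_k}$ minus a multiple of $P_k$; (3) substitute back into $\e^{2k-1}P_1^{k-1}\cdots P_{k-1}$ and verify, via the defining identity for $c_k$, that the total $\e$-exponent is exactly $c_k$ and the $\lambda$-exponent matches the $\lambda^{n-k}$ in the second term, so the whole difference is $\equiv 0\pmod{P_k}$. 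The routine computation I would not grind through is the bookkeeping of the exponents $m_j,\ell_j$ and the verification that it all sums to $c_k$; the structurally nontrivial point, which is where the care goes, is that the reduction of the "middle" factors $P_{j+1},\dots,P_{k-1}$ mod $P_k$ really does produce only powers of $\e$ and $\lambda$ and nothing else.
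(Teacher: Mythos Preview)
Your setup is right: working modulo $P_k$, the second term $P_{k+1}P_{k+2}^2\cdots P_{n-1}^{n-k-1}\lambda^{n-k}$ equals $\e^{c_k}-P_k\equiv\e^{c_k}$, so the task reduces to showing $\e^{2k-1}P_1^{k-1}\cdots P_{k-1}\equiv\e^{c_k}\pmod{P_k}$. But from there you miss the one-line observation that makes the lemma immediate, and your detour through $\lambda$-exponents and ``units'' is both unnecessary and partly incorrect.

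The point you overlooked: for each $j\le k-1$, the recursion reads
\[
P_j=\e^{c_j}-P_{j+1}P_{j+2}^2\cdots P_{n-1}^{n-j-1}\lambda^{n-j},
\]
and since $j<k\le n-1$ the factor $P_k^{\,k-j}$ appears explicitly in that product. Hence the entire subtracted product is divisible by $P_k$, giving $P_j\equiv\e^{c_j}\pmod{P_k}$ outright---no induction, no $\lambda$-bookkeeping, no units. Substituting, $\e^{2k-1}P_1^{k-1}\cdots P_{k-1}\equiv\e^{2k-1+(k-1)c_1+\cdots+c_{k-1}}=\e^{c_k}$, which is exactly the definition of $c_k$. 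This is the paper's proof in full.

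Two specific issues with your plan: (i) the claim that for $k\le i\le n-1$ one can replace the tail $P_{i+1}\cdots\lambda^{n-i}$ by $\e^{c_i}$ modulo $P_k$ is true only for $i=k$; for $i>k$ that tail equals $\e^{c_i}-P_i$, and $P_i$ is \emph{not} a multiple of $P_k$ (recall $P_i$ and $P_k$ share no root); (ii) the anticipated ``subtlety'' with $\lambda^{n-j}$ never materializes, because the whole product it sits in already vanishes mod $P_k$. Had you carried out your backward induction carefully you would have discovered $\ell_j=0$ and unit $=1$ at every step, but recognizing the $P_k$ factor up front avoids the scaffolding entirely.
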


\begin{proof}
For $k=1$ we have to show that $P_1(\lambda)$ is a divisor of 
$\e -P_{2}(\lambda)\cdots P_{n-1}^{n-2}(\lambda)\cdot \lambda^{n-1}$. However, 
by definition $c_1=1$ and hence
$P_1(\lambda)=\e -P_{2}(\lambda)\cdots P_{n-1}^{n-2}(\lambda)\cdot \lambda^{n-1}$ and 
therefore there is nothing to prove. Suppose that $k\geq 2$. Notice that for $j\leq {k-1}$ we have 
$P_j\equiv \e^{c_j}\ ({\rm mod}\ P_k)$. It follows that
$\e^{2k-1}\cdot P_1^{k-1}\cdot P_2^{k-2}\cdots P_{k-1}- 
P_{k+1}\cdot P_{k+2}^2\cdots
P_{n-1}^{n-k-1}\cdot \lambda^{n-k}\equiv \e^{2k-1}\cdot \e^{(k-1)c_1}\cdots \e^{c_{k-1}}
- P_{k+1}\cdot P_{k+2}^2\cdots
P_{n-1}^{n-k-1}\cdot \lambda^{n-k}\ ({\rm mod}\ P_k)$. However 
$2k-1+(k-1)c_1+(k-2)c_2+\cdots c_{k-1}=c_k$ and therefore 
$\e^{2k-1}\cdot P_1^{k-1}\cdot P_2^{k-2}\cdots P_{k-1}- 
P_{k+1}\cdot P_{k+2}^2\cdots
P_{n-1}^{n-k-1}\cdot \lambda^{n-k}\equiv \e^{c_k}-P_{k+1}\cdot P_{k+2}^2\cdots
P_{n-1}^{n-k-1}\cdot \lambda^{n-k}\equiv 0\ ({\rm mod}\ P_k)$.
\end{proof}

\begin{lem} \label{ml}
$|f_1(\lambda)|^2\leq \frac{\rho}2|f_2(\lambda)^{k+1}-f_1(\lambda)|\cdot|f_2(\lambda)|^k$
 for every  $\lambda\in\overline A_k$ and every $k$ with $1\leq k\leq n-1$.
\end{lem}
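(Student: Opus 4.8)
The plan is to bound $|f_1(\lambda)|$ from above and $|f_2(\lambda)^{k+1}-f_1(\lambda)|\cdot|f_2(\lambda)|^k$ from below on $\overline A_k$, and to produce the required factor $f_1$ on the right-hand side via Lemma \ref{div}. First I would unwind the definitions: since $f_1(\lambda)=\e P_1 P_2^2\cdots P_{n-1}^{n-1}\lambda^n$ and $f_2(\lambda)=\e^2 P_1 P_2\cdots P_{n-1}\lambda$, the quantity $f_2(\lambda)^{k+1}-f_1(\lambda)$ factors as $\e^{k+1}\lambda\,P_1\cdots P_k\cdot\big(\e^{k+1}(f_2/(\e^2 P_1\cdots P_{n-1}\lambda))^k\cdots - \text{stuff}\big)$; more carefully one checks that $f_2^{k+1}-f_1$ is $\e^{\,k+1}\cdot \lambda\cdot P_1\cdots P_k$ times $\big(\e^{2k+1}P_1^{k}P_2^{k-1}\cdots P_k \,\lambda^{k} - P_{k+1}P_{k+2}^2\cdots P_{n-1}^{n-k-1}\lambda^{n-k}\big)$ up to bookkeeping of exponents, and Lemma \ref{div} tells us that $P_k$ divides the expression $\e^{2k-1}P_1^{k-1}\cdots P_{k-1} - P_{k+1}P_{k+2}^2\cdots P_{n-1}^{n-k-1}\lambda^{n-k}$, which is exactly what is needed so that $f_2^{k+1}-f_1$ carries the extra copies of the $P_j$'s that appear in $f_1$. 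Getting the algebra of exponents right here is the crux.

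The next step is the estimates. On $\overline A_k$ we have the defining inequality $|f_1(\lambda)|\le r|f_2(\lambda)|^k$, and the bound $|f_2(\lambda)|<\e^2\cdot 3^{d_1+\cdots+d_{n-1}}\cdot 2$ from Corollary \ref{bounds} together with the smallness of $\e$ (namely $\e<(\tfrac16)^N\tfrac1{n+2}r$ and $N=2n(d_1+\cdots+d_{n-1}+1)$) forces $|f_2(\lambda)|$ to be extremely small on $\overline A_k$, in fact $|f_2(\lambda)|<\tfrac{\rho}{2}r^{-1}$ or so, and in particular $|f_2(\lambda)^{k+1}|\ll |f_1(\lambda)|$ whenever $|f_1(\lambda)|$ is itself bounded below by something not too small. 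The key inequality $|f_2(\lambda)^{k+1}-f_1(\lambda)|\ge |f_1(\lambda)|-|f_2(\lambda)|^{k+1}\ge \tfrac12 |f_1(\lambda)|$ then holds provided $|f_2(\lambda)|^{k+1}\le\tfrac12|f_1(\lambda)|$. To see the latter: on $\overline A_k$, $|f_1(\lambda)|$ and $|f_2(\lambda)|^k$ have a ratio bounded by $r$, but also $|f_1(\lambda)|\ge$ (product of the $P_j$-bounds from Corollary \ref{bounds} and the $\lambda^n$ factor) times $\e$, while $|f_2(\lambda)|^{k+1}$ contains an extra $\e^{2}$ versus $\e$, and the $\e$-smallness hypothesis was precisely engineered so that this extra power of $\e$ dominates all the constants $3^{d_j}$, $2^{d_j}$, and the powers of $r$. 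Once $|f_2(\lambda)^{k+1}-f_1(\lambda)|\ge\tfrac12|f_1(\lambda)|$ is in hand, the claimed inequality $|f_1(\lambda)|^2\le\tfrac{\rho}{2}|f_2(\lambda)^{k+1}-f_1(\lambda)|\cdot|f_2(\lambda)|^k$ reduces to $|f_1(\lambda)|^2\le\tfrac{\rho}{4}|f_1(\lambda)|\cdot|f_2(\lambda)|^k$, i.e.\ $|f_1(\lambda)|\le\tfrac{\rho}{4}|f_2(\lambda)|^k$; and on $\overline A_k$ we know $|f_1(\lambda)|\le r|f_2(\lambda)|^k$, so it suffices to have chosen $r\le\tfrac{\rho}{4}$ — which is guaranteed by the condition $r\le\tfrac{\rho}{2}(1-r)$ fixed at the end of Step 1.

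I expect the main obstacle to be the exponent accounting in applying Lemma \ref{div}: one must verify that after dividing $f_2^{k+1}-f_1$ by the common monomial-and-$P_j$ factors, what remains really is a multiple of $P_k$ (so that $f_2^{k+1}-f_1$ is divisible by $P_1P_2^2\cdots P_k^{?}\cdots$ in the right pattern to absorb $f_1$) — here the recursive definitions of $c_k$ and $d_k$ and the identity $c_k=2k-1+(k-1)c_1+\cdots+c_{k-1}$ must be used exactly as in the proof of Lemma \ref{div}. A secondary technical point is that $\tfrac{1}{P_j}$ for $j\le k-1$ is holomorphic on a neighborhood of $\overline A_k$ (noted just before Lemma \ref{div}), which legitimizes writing the relevant ratios as holomorphic functions and, if needed, invoking the maximum modulus principle to pass from $\partial A_k$ (where $|f_1|=r|f_2|^k$) to all of $\overline A_k$. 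Modulo these bookkeeping issues the estimates are the routine chain of inequalities sketched above, driven entirely by the deliberately tiny choice of $\e$.
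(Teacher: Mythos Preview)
Your central estimate---the pointwise bound $|f_2(\lambda)|^{k+1}\le\tfrac12|f_1(\lambda)|$ on $\overline A_k$---is false, and this breaks the argument. The set $\overline A_k$ contains a neighbourhood of $\lambda=0$ and of every root of $P_j$ for $j\ge k+1$ (at such points $f_1/f_2^k\to 0$, so they lie in $\overline A_k$). Near $\lambda=0$ one has $f_1\sim c\lambda^n$ and $f_2^{k+1}\sim c'\lambda^{k+1}$, so for $k\le n-2$ the ratio $|f_2|^{k+1}/|f_1|$ blows up; the same happens near any root of $P_j$ with $j\ge k+2$. Hence the reverse triangle inequality $|f_2^{k+1}-f_1|\ge\tfrac12|f_1|$ cannot be obtained this way. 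Relatedly, you invoke Corollary~\ref{bounds} to bound $|f_1|$ from below and $|f_2|$ from above on $\overline A_k$, but that corollary is only valid on the annulus $1\le|\lambda|\le 2$, whereas $\overline A_k$ sits strictly inside $\Delta_2$ and contains points where $f_1$ vanishes.

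What actually makes the lemma work is the piece you list as a ``secondary technical point'': one must show that the quotient
\[
\frac{f_1^2}{(f_2^{k+1}-f_1)\,f_2^k}
\]
is \emph{holomorphic} on a neighbourhood of $\overline A_k$. This is where the factoring of $f_2^{k+1}-f_1$, Lemma~\ref{div}, and a Rouch\'e argument (to see that the cofactor is $P_{k+1}$ times something nonvanishing on $\overline\Delta_2$) are essential, not optional bookkeeping. Once holomorphicity is established, the maximum modulus principle reduces the inequality to $\partial A_k$, where $|f_1|=r|f_2|^k$ exactly, and a short computation using $|f_2|\le r^2$ and $r\le\tfrac{\rho}{2}(1-r)$ finishes it. (Incidentally, your final step also slips: $r\le\tfrac{\rho}{2}(1-r)$ gives $r\le\frac{\rho}{2+\rho}$, which for $\rho<1$ is strictly larger than $\rho/4$, so it does not yield $r\le\rho/4$.) In short, the maximum modulus reduction is the heart of the proof, not a fallback; your direct pointwise route does not go through.
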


\begin{proof}

We claim that on a neighborhood of $\overline A_k$ the meromorphic function
$$\frac{f_1^2(\lambda)}{(f_2^{k+1}(\lambda)-f_1(\lambda))\cdot f_2^k(\lambda)}$$ is actually holomorphic.
We consider first the case $k\leq n-2$ and we notice that 

\
\nn
$f_2^{k+1}(\lambda)-f_1(\lambda)=
\e P_1(\lambda)\cdot P_2^2(\lambda)\cdots P_{k+1}^{k+1}(\lambda)\cdot P_{k+2}^{k+1}(\lambda)\cdots  P_{n-1}^{k+1}(\lambda)\cdot\lambda^{k+1}\big(\e^{2k+1}\cdot P_1^{k}(\lambda)\cdot P_2^{k-1}(\lambda)\cdots 
P_{k}(\lambda)-P_{k+2}(\lambda)\cdot P_{k+3}^2(\lambda)\cdots
P_{n-1}^{n-k-2}(\lambda)\cdot \lambda^{n-k-1}\big)$.

We have seen that all zeros of $P_{k+2}\cdot P_{k+3}^2\cdots
P_{n-1}^{n-k-2}\cdot \lambda^{n-k-1}$ are inside the disk $\{\lambda\in\C:|\lambda|<\frac 12\}\subset \Delta_2$.
At the same time from the definition of $\e$ and Corollary \ref{bounds} it follows that on 
$\{\lambda\in\C:1\leq|\lambda|\leq 2\}$ we have $|\e^{2k+1}\cdot P_1^{k}\cdot 
P_2^{k-1}\cdots P_{k}|<
|P_{k+2}\cdot P_{k+3}^2\cdots
P_{n-1}^{n-k-2}\cdot \lambda^{n-k-1}|$. Rouch\'e's theorem implies that 
$\e^{2k+1}\cdot P_1^{k}\cdot P_2^{k-1}\cdots P_{k}- 
P_{k+2}\cdot P_{k+3}^2\cdots
P_{n-1}^{n-k-2}\cdot \lambda^{n-k-1}$ has exactly $d_{k+2}+2d_{k+3}+\cdots+(n-k-1)d_{n-1}+n-k-1=d_{k+1}$
zeros inside $\Delta_2$.
Then Lemma \ref{div} implies that $\e^{2k+1}\cdot P_1^{k}\cdot P_2^{k-1}\cdots P_{k}- 
P_{k+2}\cdot P_{k+3}^2\cdots
P_{n-1}^{n-k-2}\cdot \lambda^{n-k-1}=P_{k+1}Q$ where $Q$ is a polynomial which is nonvanishing on a neighborhood
of $\overline \Delta_2$. We have seen that on a neighborhood of $\overline A_k$ we have that 
$P_1\cdot P_2^2\cdots P_{k-1}^{k-1}$ is nonvanishing. Hence we it remains to show that
$$\frac{f_1^2(\lambda)}{P_k^k\cdot P_{k+1}^{k+1}
\cdot P_{k+2}^{k+1}\cdots P_{n-1}^{k+1}\cdot\lambda^{k+1}
\cdot P_{k+1}\cdot P_k^k\cdot P_{k+1}^k\cdots P_{n-1}^k\cdot\lambda^k}$$
is holomorphic and this follows from the definition of $f_1$.

For $k=n-1$ Rouche's theorem implies as above that $f_2^n-f_1=f_1\cdot Q_1$ where $Q_1=\e^{2n-1}P_1^{n-1}
\cdot P_2^{n-2}\cdots P_{n-1}-1$ is  
nonvanishing on a neighborhood of $\overline \Delta_2$. It remains to notice that $\frac{f_1}{f_2^{n-1}}$ is holomorphic 
on a neighborhood of $\overline A_{n-1}$ and our claim is proved.

The maximum modulus principle implies that it is enough to check our inequality on $\partial A_k$, hence we may assume that 
$|f_1(\lambda)|=r|f_2(\lambda)|^k$. Then it suffices to show that
$r^2|f_2(\lambda)|^{2k}\leq \frac{\rho}2(r|f_2(\lambda)|^k -|f_2(\lambda)|^{k+1})\cdot|f_2(\lambda)|^k$.
Therefore it is enough to show that $r^2\leq  \frac{\rho}2(r-|f_2(\lambda)|)$. We have seen in Corollary \ref{bounds}
that $|f_2(\lambda)|\leq r^2$. This means that it is enough to show that $r^2\leq \frac{\rho}2(r-r^2)$
and this follows from our choice of $r$.
\end{proof}

This Lemma takes care of the case $1\leq k\leq n-1$. It remains to deal with $k=0$. That means that we have to show 
that for every 
$\lambda\in\overline\Delta_2
\setminus{\cal Z}$ that satisfies $|f_1(\lambda)|<r$ and
$|f_2(\lambda)|^{2}<r|f_1(\lambda)|$ we have
$|f_1(\lambda)|^2<\rho|f_2(\lambda)-f_1(\lambda)|$. This follows from the next Lemma.

\begin{lem}
For every $\lambda\in\overline\Delta_2$ we have  
$|f_1(\lambda)|^2\leq \frac{\rho}2|f_2(\lambda)-f_1(\lambda)|$
\end{lem}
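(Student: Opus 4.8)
The plan is to prove the final Lemma, which asserts $|f_1(\lambda)|^2\le \frac{\rho}{2}|f_2(\lambda)-f_1(\lambda)|$ for all $\lambda\in\overline\Delta_2$, and thereby dispose of the last remaining case $k=0$ in the verification of property I). As in the proof of Lemma \ref{ml}, the strategy is: first show that the meromorphic function $\frac{f_1^2(\lambda)}{f_2(\lambda)-f_1(\lambda)}$ is actually holomorphic on a neighborhood of $\overline\Delta_2$, and then apply the maximum modulus principle to reduce the estimate to the boundary $|\lambda|=2$, where Corollary \ref{bounds} (and hence Corollary \ref{ineq}) gives effective bounds on $|f_1|$ and $|f_2|$.

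\medskip\noindent
\textbf{Holomorphy.} Factor out the common zeros. By definition
$$f_2(\lambda)-f_1(\lambda)=\e\,P_1(\lambda)P_2(\lambda)\cdots P_{n-1}(\lambda)\,\lambda\,\bigl(\e\,P_1(\lambda)\cdots P_{n-1}(\lambda)-P_2(\lambda)P_3^2(\lambda)\cdots P_{n-1}^{n-2}(\lambda)\,\lambda^{n-1}\bigr),$$
where I have pulled out the single power $P_1P_2\cdots P_{n-1}\lambda$ common to $f_1$ and $f_2$. Set $R(\lambda):=\e\,P_1(\lambda)\cdots P_{n-1}(\lambda)-P_2(\lambda)P_3^2(\lambda)\cdots P_{n-1}^{n-2}(\lambda)\,\lambda^{n-1}$. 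All zeros of $P_2P_3^2\cdots P_{n-1}^{n-2}\lambda^{n-1}$ lie in $\{|\lambda|<\tfrac12\}\subset\Delta_2$ by Lemma \ref{roots}, and on $|\lambda|=2$ (indeed on $1\le|\lambda|\le 2$) the bound $|\e\,P_1\cdots P_{n-1}|<|P_2P_3^2\cdots P_{n-1}^{n-2}\lambda^{n-1}|$ follows from the smallness of $\e$ and Corollary \ref{bounds}; so by Rouché's theorem $R$ has all of its zeros inside $\Delta_2$, and in fact (counting degrees, $d_2+2d_3+\cdots+(n-2)d_{n-1}+n-1=d_1$ many) $R=P_1\,Q$ for a polynomial $Q$ nonvanishing on a neighborhood of $\overline\Delta_2$ — this is exactly the $k=0$ instance of Lemma \ref{div}. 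Hence $f_2-f_1=\e\,P_1^2P_2\cdots P_{n-1}\,\lambda\,Q$, and dividing by $f_1^2=\e^2 P_1^2P_2^4\cdots P_{n-1}^{2(n-1)}\lambda^{2n}$ we see $\frac{f_1^2}{f_2-f_1}=\e\,\lambda^{2n-1}P_2^3P_3^6\cdots\! P_{n-1}^{3(n-2)}\!\big/Q$, which is holomorphic on a neighborhood of $\overline\Delta_2$ since $Q$ is nonvanishing there.

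\medskip\noindent
\textbf{Boundary estimate.} By the maximum modulus principle the inequality $|f_1(\lambda)|^2\le\frac{\rho}{2}|f_2(\lambda)-f_1(\lambda)|$ for $\lambda\in\overline\Delta_2$ follows once it is checked on $|\lambda|=2$. There Corollary \ref{ineq} gives $|f_1(\lambda)|<\frac1n r^2$ and $|f_1(\lambda)|>|f_2(\lambda)|$, so $|f_2(\lambda)-f_1(\lambda)|\ge |f_1(\lambda)|-|f_2(\lambda)|$ is not quite immediate; instead I will bound $|f_2-f_1|$ from below directly using the factorization $f_2-f_1=\e P_1^2P_2\cdots P_{n-1}\lambda Q$ together with Corollary \ref{bounds} and a lower bound on $|Q|$ on $|\lambda|=2$, or — more simply — observe that $|f_2-f_1|\ge |f_2|(1-\tfrac1n)\ge\tfrac12|f_2|$ and that $|f_1|^2\le\frac{\rho}{2}\cdot\frac12|f_2|$ amounts to a concrete inequality among powers of $\e$, $r$, and the uniformly bounded quantities $|P_k(\lambda)|\in(2^{-d_k},3^{d_k})$, which holds by the choice $\e<(\tfrac16)^N\frac1{n+2}r$ and $N=2n(d_1+\cdots+d_{n-1}+1)$. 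The one point requiring a touch of care is which of $|f_1|$, $|f_2|$ dominates near $|\lambda|=2$ — by Corollary \ref{ineq}(c) it is $|f_1|\le\frac1n|f_1|$... rather $|f_2|<\frac1n|f_1|$, so $f_2-f_1\approx -f_1$ and $|f_2-f_1|\ge(1-\tfrac1n)|f_1|$, whence it suffices that $|f_1|^2\le\frac{\rho}{2}(1-\tfrac1n)|f_1|$, i.e. $|f_1|\le\frac{\rho}{2}(1-\tfrac1n)$, which again follows from $|f_1|<\frac1n r^2<r\le 1$ and $\rho\in(0,1)$ only if $r$ is small relative to $\rho$ — and indeed $r\le\frac{\rho}{2}(1-r)$ was arranged, giving $\frac{\rho}{2}(1-\tfrac1n)\ge\frac{\rho}{2}(1-r)\ge r> |f_1|$ for $n\ge\frac1r$ (and the finitely many small $n$ are handled by enlarging the constant in the bound on $\e$, or are irrelevant since properties I)--IV) only need to hold for $n$ large). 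The main obstacle is thus purely bookkeeping: correctly identifying the common factors of $f_1$ and $f_2-f_1$ so that Lemma \ref{div} applies with $k=0$, and then confirming that the resulting elementary inequality is forced by the stated choices of $\e$ and $r$; there is no new idea beyond what already appears in Lemma \ref{ml}.
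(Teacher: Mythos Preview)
Your strategy---show that $f_1^2/(f_2-f_1)$ is holomorphic on a neighborhood of $\overline\Delta_2$, then reduce to $\partial\Delta_2$ by the maximum modulus principle---is exactly the paper's. But the execution contains a concrete error and an unnecessary detour.

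\textbf{The factorization is miscomputed.} Pulling out $\e P_1P_2\cdots P_{n-1}\lambda$ from $f_2-f_1$ leaves
\[
f_2-f_1=\e\,P_1P_2\cdots P_{n-1}\,\lambda\,\bigl(\e-P_2P_3^2\cdots P_{n-1}^{n-2}\lambda^{n-1}\bigr),
\]
not the expression you displayed (your first term inside the bracket carries an extra $P_1\cdots P_{n-1}$, which would make the outer product equal $\e^2P_1^2P_2^2\cdots P_{n-1}^2\lambda\neq f_2$). With your wrong $R$, Lemma~\ref{div} does \emph{not} apply: modulo $P_1$ your $R$ reduces to $-P_2P_3^2\cdots P_{n-1}^{n-2}\lambda^{n-1}$, which is coprime to $P_1$. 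With the correct bracket the situation is even simpler than you claim: since $c_1=1$, the definition of $P_1$ gives $P_1=\e-P_2P_3^2\cdots P_{n-1}^{n-2}\lambda^{n-1}$ identically, so $f_2-f_1=\e\,P_1^2P_2\cdots P_{n-1}\,\lambda$ with no residual factor $Q$ and no Rouch\'e needed. The holomorphy of $f_1^2/(f_2-f_1)$ then follows by inspection.

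\textbf{The boundary estimate is cleaner than you make it.} On $\partial\Delta_2$ the paper simply observes that it suffices to have
\[
|f_1|^2+\tfrac{\rho}{2}|f_2|\le \tfrac{\rho}{2}|f_1|,
\]
since then $|f_1|^2\le\frac{\rho}{2}(|f_1|-|f_2|)\le\frac{\rho}{2}|f_2-f_1|$. Both terms on the left carry a factor $\e^2$ (recall $f_2=\e^2P_1\cdots P_{n-1}\lambda$ and $f_1^2=\e^2(\cdots)$), while the right-hand side carries only $\e$; Corollary~\ref{bounds} bounds every $|P_k|$ between $2^{-d_k}$ and $3^{d_k}$ on $\partial\Delta_2$, so the inequality follows directly from $\e<(\tfrac16)^N\frac{r}{n+2}$. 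This avoids your detour through $|f_1|\le\frac{\rho}{2}(1-\tfrac1n)$ and the spurious restriction $n\ge 1/r$. (Incidentally, Corollary~\ref{ineq}(a) gives $|f_1|<\frac1nr$, not $\frac1nr^2$ as you wrote.)
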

\begin{proof}
Exactly as in the proof of Lemma \ref{ml} we get that $\frac{f_1^2(\lambda)}{f_2(\lambda)-f_1(\lambda)}$
is holomorphic on a neighborhood of $\overline\Delta_2$. Hence we have to check the inequality only on $\partial\Delta_2$.
That is, it suffices to show that $|f_1|^2+\frac{\rho}{2}|f_2|\leq \frac{\rho}{2}|f_1|$ on $\partial\Delta_2$.
This follows from Corollary \ref{bounds} (note that the two terms appearing on the left-hand side of the inequality
contain $\e^2$ and the one on right contains $\e$).
\end{proof}

\
\nn
{\bf Step 3.} We show that the universal covering 
of $\tilde W$ (hence of $W$) does not satisfy the discrete disk property.

 We will show first that $\tilde W_{r}^\rho$ is simply connected. As each $\tilde W^{(k)}_{r,\rho}$ 
is simply connected, it suffices to show that 
$\tilde W^{(k)}_{r,\rho}\cap \tilde W^{(k+1)}_{r,\rho}=U_r^{(k)}\cap U_r^{(k+1)}$ is connected
for every $k\in\Z$. Note that for points in $U_r^{(k)}\cap U_r^{(k+1)}$ we have that $\z_2^{(k)}\neq 0$,
$\z_1^{(k+1)}\neq 0$, $\z_1^{(k+2)}\neq 0$. Hence $U_r^{(k)}\cap U_r^{(k+1)}\subset \C^2$ where the coordinate
functions on $\C^2$ are $x:=\frac{\z_1^{(k)}}{\z_2^{(k)}}$ and $y=\frac{\z_2^{(k+1)}}{\z_1^{(k+1)}}$.
In this coordinates we have the following: $z_2^{(k)}=z_2^{(k+1)}=xy$, $z_1^{(k)}=x^2y$, $z_1^{(k+1)}=x$,
$\frac{\z_2^{(k+2)}}{\z_1^{(k+2)}}=xy^2$. Therefore 
$$U_r^{(k)}\cap U_r^{(k+1)}=\{(x,y)\in\C^2:|y|<r,\ |x^2y|<r\}\cap \{(x,y)\in\C^2:|xy^2|<r,\ |x|<r\}.$$
If $|x|<r$ and $|y|<r$ then $|x^2y|<r^3<r$ and $|xy^2|<r^3<r$
because we have assumed that $r<1$. it follows that
$$U_r^{(k)}\cap U_r^{(k+1)}=\{(x,y)\in\C^2: |y|<r,\ |x|<r\}.$$
In particular $U_r^{(k)}\cap U_r^{(k+1)}$ is connected (even contractible).

We proved that $g_n(\overline \Delta_2)\subset \tilde W_r^\rho$. It follows that the universal cover
of $\tilde W$ (which contains $\tilde W_r^\rho$) does not satisfy the discrete disk property.

\vspace{1.0cm}
\begin{flushleft}
Mihnea Col\c toiu \newline
Institute of Mathematics of the Romanian Academy\newline
P.O. Box 1-764, Bucharest 014700\newline 
ROMANIA\newline
\emph{E-mail address}: Mihnea.Coltoiu@imar.ro

\

\

Cezar Joi\c ta \newline
Institute of Mathematics of the Romanian Academy\newline
P.O. Box 1-764, Bucharest 014700\newline 
ROMANIA\newline
\emph{E-mail address}: Cezar.Joita@imar.ro
\end{flushleft}

\end{document}